\newtheorem{thm}{Theorem}[section]
\newtheorem{lem}[thm]{Lemma}
\newtheorem{prop}[thm]{Proposition}
\newtheorem{cor}[thm]{Corollary}
\newtheorem{defn}[thm]{Definition}
\newtheorem{rmk}[thm]{Remark}
\newcommand{\mpc}{\mathrm{MPCT}}
\newcommand{\wgt}{\mathrm{wt}}
\newcommand{\comp}{\mathrm{comp}}
\newcommand{\pct}{\mathrm{PCT}}
\newcommand{\spct}{\mathrm{SPCT}}
\newcommand{\smpc}{\mathrm{SMPCT}}
\newcommand{\qsym}{\mathrm{QSYM}}
\newcommand{\pc}{\mathcal{PC}}
\newcommand{\rw}{\mathrm{rw}}
\newcommand{\ts}{T_{S,\beta}}
\newcommand{\Des}{\mathrm{Des}}
\newcommand{\Peak}{\mathrm{Peak}}
\newcommand{\YQS}{\hat{\mathscr{S}}}
\newcommand{\RQS}{\check{\mathscr{S}}}
\title{ A Note on Jing and Li's type B quasisymmetric Schur Functions}
\author{Ezgi Kantarci O\u{g}uz}
\begin{document}
\ytableausetup{notabloids}

\maketitle

\begin{abstract} In 2015, Jing and Li defined type B quasisymmetric Schur functions and conjectured that these functions have a positive, integral and unitriangular expansion into peak functions. We prove this conjecture, and refine their combinatorial model to give explicit expansions in monomial, fundamental and peak bases. We also show that these functions are not quasisymmetric Schur, Young quasisymmetric Schur or dual immaculate positive, and do not have a positive multiplication rule.
    
\end{abstract}
\section{Introduction}

The algebra of quasisymmetric functions $\qsym$, introduced by Gessel in 1984 \cite{MR777705} is the terminal object in the category of Hopf Algebras \cite{MR2196760}, and has applications in representation theory \cite{MR1245159}, crystal graphs \cite{MR2527604} and matroids \cite{MR2552658}. In 2007, a new basis for $\qsym$ was defined by Haglund, Luoto, Mason and van Willigenburg \cite{MR2739497} called the quasisymmetric Schur functions. In addition to refining Schur functions in a natural way, these functions exhibit several nice properties of the Schur functions, including having Pieri and Littlewood-Richardson rules \cite{MR3097867}. 

A natural question to ask is if there is a type B analogue for the quasisymmetric Schur functions. In 2015, a candidate basis $\widehat{P}$ was proposed by Jing and Li \cite{MR3366479}, that refined Schur's $P$-functions as an alternating sum. They conjectured that these quasisymmetric Schur $P$-functions have an integral, unitriangular and positive expansion in terms of the peak functions. 

The main purpose of this work is to prove this conjecture, as well as give an explicit formula. We refine the peak composition tableaux defined in \cite{MR3366479} using the marked alphabet, giving precise expansions in terms of the monomial, fundamental and peak bases. We also provide counterexamples to show that those functions do not expand positively into Young's quasisymmetric Schur functions, and that their multiplication with Schur's $P$-functions is not $\widehat{P}$-positive.

\section{Preliminaries}
\subsection{Quasisymmetric Functions}

A \emph{weak composition} of $n$ is a list of non-negative numbers that add up to $n$, called its parts. A \emph{strong composition} is a weak composition whose parts are all non-zero. Throughout this work, we will be interested in strong compositions, which we will simply call the compositions of $n$, adding strong only when confusion is possible. For a composition $\beta$ of $n$, we will call $n$ the \emph{size} of $\beta$, denoted by $|\beta|$ and the number of its parts the \emph{length} of $\beta$, denoted by $\ell(\beta)$. For $n\leq m$, a composition $\gamma=(\gamma_1,\gamma_2,\ldots,\gamma_m)$ is said to be a \emph{refinement} of another composition $\beta=(\beta_1,\beta_2,\ldots,\beta_n)$ (denoted by $\gamma \preceq \beta$) if there exist numbers $i_1,i_2,\ldots,i_{n-1}$ such that we have $\beta_{j}=\sum_{k=i_{j-1}+1}^{i_j}\gamma_k$ for all $j\in[n]$, with the convention that $i_0=0$ and $i_n=m$.

Quasisymmetric functions are formal power series of bounded degree in variables $x_1,x_2,\ldots$ where for a given composition $\alpha=(\alpha_1,\alpha_2,\ldots,\alpha_n)$ the coefficient of $x_{i_1}^{a_1}x_{i_2}^{a_2}\cdots x_{i_t}^{a_t}$ is the same for any $i_1<i_2<\cdots<i_t$. They form a graded algebra $\qsym=\bigoplus_n \qsym^{(n)}$ where $\qsym^{(n)}$ stands for the homogeneous quasisymmetric functions of degree $n$. The subspace $\qsym^{(n)}$ is generated by the monomial quasisymmetric functions of degree $n$, defined for compositions $\beta=(\beta_1,\beta_2...\beta_m)$ of size $n$ as follows: 
\begin{eqnarray}
M_{\beta}(X)=\sum_{i_1< i_2<\cdots< i_m}x_{i_1}^{\beta_1}x_{i_2}^{\beta_2}\cdots x_{i_m}^{\beta_m}
\end{eqnarray}
where $X$ is a shorthand for the variables $(x_1,x_2,x_3,\ldots)$. 

Another basis for $\qsym$ that we will make use of is the fundamental basis, defined by Gessel \cite{MR777705}. Compositions of $n$ and subsets of $[n-1]$ are related by a bijection taking a composition $\beta=(\beta_1,\beta_2,\ldots,\beta_n)$ to the set $\{\beta_1, \beta_1+\beta_2,\ldots,\beta_1+\beta_2\cdots+\beta_{n-1}\}$. This bijection associates each $D \subseteq [n-1]$ to a unique composition, which we will denote by $\comp(D)$. Note that $\comp(D_1)$ is a refinement of $\comp(D_2)$  if and only if $D_1 \supseteq D_2$. The fundamental basis for quasisymmetric functions is given as follows:

\begin{eqnarray}\label{eq:F}
F_D(X)=\sum_{\beta \preceq \comp(D)} M_{\beta}(X).
\end{eqnarray}

\subsection{Peak Composition Tableaux and Quasisymmetric Schur $Q$-functions}

The compositions $\alpha=(\alpha_1,\alpha_2,\ldots,\alpha_m)$ of $n$ with no parts except possibly the last one equal to $1$ are called the \emph{peak compositions}.  We will use the notation $\pc(n)$ for the set of peak compositions of $n$ and set $\pc=\bigcup_n \pc(n)$. Note that for a subset $P$ of $[n-1]$, $\comp(P)$ is a peak composition if and only if $P$ does not contain any consecutive entries or $1$. 

For any $P \subseteq [n-1]$ with $\comp(P) \in \pc(n)$ we associate a quasisymmetric function called its \emph{peak function}, defined in \cite{MR1389788} by:
\begin{eqnarray}
G_P(X)=\sum_{\substack{D\in [n-1]\\ P \subseteq D \triangle D+1}}F_{D}(X),
\end{eqnarray}
where $D+1$ is the subset of $[n]\backslash\{1\}$ defined by adding $1$ to the elements of $D$, and $D \triangle D+1$ denotes the symmetric difference of the two sets.

The Ferrers diagram for a peak composition $\alpha=(\alpha_1,\alpha_2,\ldots,\alpha_k)$ is an array of boxes with $\alpha_i$ boxes in row $i$ (We use the English notation of drawing the first row highest in our diagrams).

For a given peak composition $\alpha$, a \emph{peak composition tableau} \cite{MR3366479} of shape $\alpha$ is a filling of the diagram of $\alpha$ with positive integers satisfying the following properties:

\begin{enumerate}
    \item Each row is weakly increasing from left to right.
    \item First column is strictly increasing from top to bottom.
    \item For any $k$, the subdiagram of $\alpha$ labeled with numbers $\{1, 2,\ldots,k\}$ is a peak composition.
\end{enumerate}

\begin{figure}[h]
      \begin{ytableau}
  1&2&6 \\
 3&4\\
 4&4&5\\
 5\\
  \end{ytableau}
  \caption{A peak composition tableau of shape (3,2,3,1) and weight (1,1,1,3,2,1)}
\end{figure}

 We denote by $\pct(\alpha)$ the set of peak composition tableaux of shape $\alpha$ whose weight is a strong composition (equivalently, no number is skipped in the labeling). To any $T \in \pct(\alpha)$ we can naturally associate a weak composition called the \emph{weight} of T, $\wgt(T)=(\wgt(T)_1,\wgt(T)_2,\ldots,\wgt(T)_n)$ where $\wgt(T)_i$ stands for the number of occurrences of $i$ in $T$. 
 
 \begin{figure}[h]
 $T_1$=     \begin{ytableau}
  1&1 \\
 2\\
  \end{ytableau}\qquad  \qquad    $T_2$=   \begin{ytableau}
  1&2 \\
 2\\
  \end{ytableau}\qquad    \qquad  $T_3$=   \begin{ytableau}
  1&2 \\
 3\\\end{ytableau}
     \caption{The elements of $\pct(2,1)$}
     \label{fig:pct21}
 \end{figure}
 
 We can extend the refinement ordering on compositions of a number $n$ to a total ordering called the \emph{lexicographic order} by setting $\alpha=(\alpha_1,\alpha_2,\ldots,\alpha_l) > \beta=(\beta_1,\beta_2,\ldots,\beta_m)$ if there exists some $k$ satisfying $\alpha_i=\beta_i$ for all $i <k$ and $\alpha_k > \beta_k$. 

\begin{rmk} \label{rmk:triangle}For a peak composition tableau $T$ of shape $\alpha$, we have $|\wgt(T)|=|\alpha|$. Furthermore, $\wgt(T)$ is less than or equal to $\alpha$ with the lexicographical order.
\end{rmk}

To facilitate our proofs, we will replace the original definition of quasisymmetric Schur $P$-functions with the defining Proposition $4.16$ from  \cite{MR3366479}.

\begin{defn}[\cite{MR3366479}] For a peak composition $\alpha$, the corresponding quasisymmetric Schur $P$- and $Q$-functions are defined as follows:
\label{def:jingli}
\begin{eqnarray}
\widehat{P}_{\alpha}(X)&=&\sum_{T\in\pct(\alpha)}2^{p(T)-m(T)}M_{\wgt(T)}(X),\\
\widehat{Q}_{\alpha}(X)&=&2^{\ell(\alpha)}\widehat{P}_{\alpha}(X).
\end{eqnarray}
Here $p(T)=\sum_{i\leq \ell(\alpha)}p_i(T)$ where $p_i(T)+1$ is the number of distinct integers in row $i$ and $m(T)$ denotes the number of boxes in the first column whose bottom and right adjacent boxes have the same number, as illustrated below:
\ytableausetup{boxsize=.42cm}
\begin{center}\begin{ytableau}
  x&y \\
 y\\
  \end{ytableau}\end{center}
\end{defn}

For the example  given in Figure \ref{fig:pct21}, we have:
\begin{eqnarray*}
\widehat{P}_{(2,1)}(X)= M_{(2,1)}(X)+M_{(1,2)}(X)+2M_{(1,1,1)}(X).
\end{eqnarray*}
Note that the multiplicity of the monomials corresponding to $T_1$ and $T_2$ are $1$, and the multiplicity of the monomial corresponding to $T_3$ is $2$. 

Schur's $Q$-functions are type $C$ analogues of the Schur functions, that are related to the projective representations of the symmetric group. The type $B$ case gives Schur's $P$-functions, which only differ from the corresponding $Q$-functions by a multiple of $2$. The interested reader can refer to Chapter $III.8$ of \cite{MR3443860} for more information on the subject.  It is shown in \cite{MR3366479} that Schur's $Q$-functions can be written as an alternating sum of quasisymmetric Schur $Q$-functions. 
\begin{thm}[\cite{MR3366479}] For any strict partition $\lambda$ we have:
\begin{eqnarray}\label{eq:alternating}
Q _{\lambda}(X)=\sum_{\substack{\alpha \in \pc\\ \mathrm{sort}(\alpha)=\lambda}}(-1)^{\mathrm{length}(\sigma_{\alpha})}\widehat{Q}_{\alpha}(X),
\end{eqnarray}
where $\mathrm{sort}(\alpha)$ is the unique rearrangement of $\alpha$ into a partition, and $\sigma_{\alpha}$ is a corresponding permutation of minimal length. 
\end{thm}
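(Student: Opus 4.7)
The plan is to compare the coefficients of $M_\beta$ on both sides of \eqref{eq:alternating} and collapse the right-hand side via a sign-reversing involution. First, I would invoke the classical combinatorial description of $Q_\lambda(X)$ as a generating function over marked shifted tableaux of shape $\lambda$, which yields an expansion
\begin{eqnarray*}
Q_\lambda(X) = \sum_\beta c_{\lambda,\beta}\, M_\beta(X),
\end{eqnarray*}
where $c_{\lambda,\beta}$ counts shifted tableaux with content $\beta$ subject to the standard marking rules.

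On the right-hand side, Definition \ref{def:jingli} gives the coefficient of $M_\beta$ as
\begin{eqnarray*}
\sum_{\substack{\alpha \in \pc \\ \mathrm{sort}(\alpha)=\lambda}} (-1)^{\mathrm{length}(\sigma_\alpha)}\, 2^{\ell(\alpha)} \sum_{\substack{T \in \pct(\alpha) \\ \wgt(T)=\beta}} 2^{p(T)-m(T)}.
\end{eqnarray*}
The key observation is that the factor $2^{\ell(\alpha)+p(T)-m(T)}$ admits a natural interpretation as the number of ways to place marks on the entries of $T$ subject to rules mirroring those for shifted tableaux (one factor of $2$ per new letter in each row, adjusted by the $m(T)$ correction when the first column forces a non-choice). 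This converts the right-hand side into a signed sum over marked peak composition tableaux.

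Next, I would construct a sign-reversing, weight-preserving involution on those marked pairs $(\alpha,T)$ whose shape $\alpha$ is not already the partition $\lambda$. Locating the topmost adjacent pair of rows where $\alpha$ disagrees with $\lambda$ (an inversion of $\sigma_\alpha$), the involution would perform a local, jeu-de-taquin-style swap that exchanges these two rows together with an adjustment of the relevant marks; this flips exactly one inversion of $\sigma_\alpha$, changing the sign, and preserves both the weight and the overall marked structure. The fixed points, where $\alpha = \lambda$ and the peak composition tableau is forced to have strictly increasing rows and columns, are identified with marked shifted tableaux of shape $\lambda$ and content $\beta$, recovering $c_{\lambda,\beta}$.

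The main obstacle is the involution itself. The peak condition on every subdiagram $\{1,\ldots,k\}$ is a global constraint, so a naive row swap will typically break it; one must be careful to restrict to a locally reversible operation on the pair of rows and simultaneously account for the subtle role of $m(T)$, which arises precisely when the first column's entry is forced to equal its right neighbor. Once the involution is shown to be well-defined, sign-reversing, and fixed-point set equal to shifted tableaux, the equality \eqref{eq:alternating} follows by matching the monomial expansions.
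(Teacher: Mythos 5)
This theorem is not proved in the paper at all: it is quoted from Jing and Li \cite{MR3366479} as background, so there is no internal proof to compare against. Judged on its own terms, your proposal is a plan rather than a proof, and the plan's load-bearing component is exactly the part you leave unconstructed. The reduction of both sides to coefficients of $M_\beta$ and the reinterpretation of $2^{\ell(\alpha)+p(T)-m(T)}$ as a count of markings are fine (the paper's Section 3 carries out precisely this marking argument for a different purpose), but everything then rests on the sign-reversing involution, which you only describe as ``a local, jeu-de-taquin-style swap'' at the topmost inversion. That is where the theorem actually lives: a naive adjacent-row swap does not obviously preserve the triangle condition (3) of a peak composition tableau, need not land in a peak composition (swapping can create an internal part equal to $1$, which is then not a valid shape $\alpha$ and so does not appear on the right-hand side at all), and the ``topmost inversion'' can move after the swap, breaking involutivity. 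None of these obstructions is addressed beyond being named.

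There is also a concrete error in your description of the fixed points. Peak composition tableaux of shape $\lambda$ (for $\lambda$ a strict partition) are fillings of the left-justified composition diagram with weakly increasing rows, while marked shifted tableaux of shape $\lambda$ live on the shifted diagram and permit repeated unmarked letters in rows and repeated marked letters in columns; they are not characterized by ``strictly increasing rows and columns,'' and the identification of the surviving terms with the coefficient $c_{\lambda,\beta}$ of $Q_\lambda$ requires a genuine bijection that your sketch does not supply. Until the involution is defined, shown to be a weight-preserving, sign-reversing bijection on the non-fixed terms, and its fixed-point set is matched to marked shifted tableaux, the proposal does not establish \eqref{eq:alternating}.
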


It was conjectured in \cite{MR3366479} that the quasisymmetric Schur $P$-functions have a unitary, unitriangular and positive expansion in terms of the peak functions. The main result of this paper is to provide a proof for this conjecture, and give this expansion explicitly (Theorem \ref{thm:main}).

\section{Marked Peak Composition Tableaux}

We introduce a variation on peak tableaux that will be a combinatorial model for the expansions in monomial, fundamental and peak bases.

\begin{defn}\label{def:mpc} For a peak composition $\alpha$, a \emph{marked peak composition tableau} of shape $\alpha$ is a filling of its diagram with numbers from the alphabet $1'<1<2'<2...$ satisying the following properties:
\begin{enumerate}
    \item Each row is weakly increasing from left to right, with no repeated marked numbers.
    \item First column is strictly increasing from top to bottom.
    \item For any positive number $k$, the subdiagram of $\alpha$ labeled with numbers $\{1', 1, 2', 2...k', k\}$ is a peak composition.
    \item If any box in the first column has an unprimed number $i$ in the box right adjacent to it, then it cannot have $i$ or $i'$ in the box adjacent below.
\end{enumerate}
\end{defn}

Note that erasing the marks from a marked peak composition tableau gives a peak composition tableau, so the definition of weight naturally extends to marked peak composition tableaux, with $\wgt(T)_i$ given by the total number of occurrences of $i$ and $i'$ in T.  

We will denote the set of the marked peak composition tableaux of shape $\alpha$ with weight given by a strong composition by $\mpc(\alpha)$. We will use the notation $\mpc^*(\alpha)$ to denote the subset of $\mpc(\alpha)$ where we have no marked numbers in the first column.

 \begin{figure}[h]
\begin{ytableau}
  1&1 \\
 2\\
  \end{ytableau}\qquad  \begin{ytableau}
  1&2' \\
 2\\
  \end{ytableau}\qquad \begin{ytableau}
  1&2' \\
 3\\\end{ytableau}\qquad \begin{ytableau}
  1&2 \\
 3\\\end{ytableau}
     \caption{The elements of $\mpc^*(2,1)$}
     \label{fig:mpc21}
 \end{figure}
 
 The elements of $\mpc^*(2,1)$ are listed in Figure \ref{fig:mpc21}. Note that by forgetting the markings, we have one element that maps to each of $T_1$ and $T_2$ each and two elements that map to $T_3$ from Figure \ref{fig:pct21}, so that we have:
 \begin{eqnarray*}
 \widehat{P}_{(2,1)}(X)&=&\sum_{T\in\mpc^*(2,1)}M_{\wgt(T)}(X).\\
 \end{eqnarray*}
 We will now see that this holds true in general.
 
\begin{prop} For a peak composition $\alpha$ we have:
\begin{eqnarray}
\widehat{P}_{\alpha}(X)&=&\sum_{T\in\mpc^*(\alpha)}M_{\wgt(T)}(X),\\
\widehat{Q}_{\alpha}(X)&=&\sum_{T\in\mpc(\alpha)}M_{\wgt(T)}(X).
\end{eqnarray}
\end{prop}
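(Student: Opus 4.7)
The plan is to study the forgetful map $\pi\colon \mpc(\alpha)\to \pct(\alpha)$ (and its restriction $\pi^*\colon \mpc^*(\alpha)\to \pct(\alpha)$) obtained by erasing all primes, and to count its fibers. Well-definedness of $\pi$ is immediate: a weakly increasing row in the marked alphabet stays weakly increasing after erasing primes; strict increase of the first column is preserved because $v<w$ implies every (marked or unmarked) representative of $v$ precedes every such representative of $w$; and condition (3) depends only on underlying integers. It therefore suffices to show that for each fixed $T\in\pct(\alpha)$,
\[
|(\pi^*)^{-1}(T)| = 2^{p(T)-m(T)} \quad\text{and}\quad |\pi^{-1}(T)| = 2^{p(T)+\ell(\alpha)-m(T)},
\]
after which the two identities follow by summing $M_{\wgt(T)}(X)$ over $T\in\pct(\alpha)$ and comparing with Definition~\ref{def:jingli}.

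The first ingredient is a row-by-row count using only condition (1). Within a given row, each maximal block of cells sharing an underlying value $v$ contributes exactly one legal binary choice: prime the leftmost cell of the block or not---priming a non-leftmost cell of the block would violate weakly increasing in the marked alphabet, and priming two cells of the block would violate no-repeated-primes. Hence row $r$, which contains $p_r(T)+1$ distinct values, admits $2^{p_r(T)+1}$ valid markings, for a total of $2^{p(T)+\ell(\alpha)}$ candidates across the tableau. Condition (2) imposes no further constraint, since the ordering $1'<1<2'<2<\cdots$ preserves strict increase of $v_1<v_2<\cdots$ regardless of priming, and condition (3) is inherited from $T$. Restricting to $\mpc^*(\alpha)$ forces exactly one bit per row---the one controlling whether the smallest distinct value of the row (which sits in the first column) is primed---to be unmarked, leaving $2^{p(T)}$ candidates.

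The main step is to show that condition (4) cuts both counts by a factor of $2^{m(T)}$. Fix a first column box $(r,1)$ with right neighbor $(r,2)$ of underlying value $i$ and bottom neighbor $(r+1,1)$ of underlying value $j$, and let $v$ denote the value at $(r,1)$. If $i\neq j$, then $(r+1,1)$ takes marked value $j$ or $j'$, neither in $\{i,i'\}$, so condition (4) is vacuous. If $i=j$ (so the box contributes to $m(T)$), strict increase of the first column gives $v<j=i$, so $(r,2)$ is the leftmost occurrence of $i$ in row $r$; since $(r+1,1)$ necessarily carries $i$ or $i'$, condition (4) forces $(r,2)$ to be primed. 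This pins down exactly one of the row-$r$ bits from the previous paragraph, and it is disjoint from the first column bits as well as from the bits forced by other Case B boxes (which live in different rows). Each of the $m(T)$ Case B boxes therefore kills one bit in both counts, giving the claimed fiber sizes.

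The principal subtlety, and the step I expect to require the most care, is the disjointness claim used above: one must verify that the bit forced by each Case B box is genuinely one of the free row-$r$ bits, distinct from the first-column bit of that row and from the forced bit of any other Case B box. This is precisely the observation that in Case B, $(r,2)$ is the leftmost $i$ in its row, so the forced bit is the row-$r$ bit associated to the second-smallest distinct value of the row---different from both the row-$r$ first-column bit and from analogous bits in other rows. Once this is verified, summing $|(\pi^*)^{-1}(T)|\,M_{\wgt(T)}(X)$ over $T\in\pct(\alpha)$ reproduces $\widehat P_\alpha(X)$, and the $\widehat Q_\alpha$ identity follows in parallel---or equivalently from $\widehat Q_\alpha = 2^{\ell(\alpha)}\widehat P_\alpha$ together with $|\pi^{-1}(T)|=2^{\ell(\alpha)}|(\pi^*)^{-1}(T)|$.
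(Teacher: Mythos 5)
Your proposal is correct and follows essentially the same route as the paper: both arguments fix $T\in\pct(\alpha)$ and count the markings, finding one free binary choice per distinct value in each row, losing one bit per row from the first-column restriction and one bit per box counted by $m(T)$ from condition (4), giving fibers of size $2^{p(T)-m(T)}$ (resp.\ $2^{p(T)+\ell(\alpha)-m(T)}$). Your explicit verification that the forced bits are pairwise distinct is a welcome extra precision, but it is the same proof.
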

\begin{proof} To prove the statement for the P-functions, it suffices to show that for each peak composition tableau $T$, there are exactly $2^{p(T)-m(T)}$ ways of marking it to obtain a marked peak composition tableau with no marks in the first column, where $p(T)$ and $m(T)$ are defined as in Definition \ref{def:jingli}.

Note that if the $i$th row of $T$ contains more than one occurrence of a number $j$, only the leftmost can be marked. So for each distinct $j$ in row $i$, we get two choices, with two exceptions. First is, if the leftmost box labeled $j$ is in the first column, we are not allowed to mark it. So far we have $p_i(T)-1$ choices to make for each row. The second exception is, if the leftmost box in row $i$ has boxes labeled $j$ adjacent to the right and below, the one adjacent to the right cannot be marked- this situation occurs $m(T)$ times. Consequently we get exactly $2^{p(T)-m(T)}$ marked peak composition tableaux.

Case of $Q$-functions follows, as there are $2^{\ell(\alpha)}$ ways of marking the boxes in the first column, and the conditions of Definition \ref{def:mpc} is independent of the markings of the first column.
\end{proof}

\begin{defn}  A marked peak composition tableau of shape $\alpha \in \pc_n$  is called \emph{marked standard} if it contains each number from $1$ to $n$, possibly marked. A marked standard tableau with no marked entries is called \emph{standard}.\label{def:markedpct}
\end{defn}

\begin{figure}[h]
      \raisebox{-0.5cm}{$S_1=$} \begin{ytableau}
  1&2&8' \\
 3&4'\\
 5&6&7'\\
 9'\\
  \end{ytableau}
  \caption{A marked standard peak composition tableau of shape $(3,2,3,1)$}
  \label{fig:s0}
\end{figure}

We will denote the set of marked standard peak composition tableaux of shape $\alpha$ by $\smpc(\alpha)$, with $\smpc^*(\alpha)$ denoting the set of marked standard tableaux with no marked entries in the first column. We will denote the set of standard peak composition tableaux of shape $\alpha$ by $\spct(\alpha)$.

For a peak composition tableau $S$, we define its \emph{reading word} $rw(T)$ to be a reading of its numbers column by column, top to bottom, left to right.  For example, the tableau in Figure \ref{fig:s0} has reading word $\rw(S_1)=1359'24'68'7'$. If $S$ is a standard tableau, the reading word gives us a permutation of $n$ so we can talk about its descent set:
\begin{eqnarray*}
\Des(T)&=&\{i\mid i\text{ is to the right of }i+1\text{ in the reading word of }T\}.
\end{eqnarray*}
The definition of descent can be extended to the marked standard tableaux $S'$ reversing the marked coordinates and prepending them to the word, and then erasing marks. For example, if the tableau we are considering is $S_1$ from Figure \ref{fig:s0}, this operation gives us the word $784913526$. So $\Des(S_1)=\Des(784913526)=\{2,3,6\}$.

Note that for a marked standard tableau $T'$,  if $i$ comes after $i+1$ in the reading word of $T'$, then $i$ is a descent if and only if it is unmarked. If $i$ comes before $i+1$ in the reading word, then $i$ is a descent if and only if $i+1$ is marked. We can generalize this to the following lemma:

\begin{lem} Assume, $i$, $i+1$, \ldots, $i+k-1$ are not descents. If $i$ comes before $i+k$ in the reading word, then $i+k$ is not marked and if $i$ comes after $i+k$, then $i$ is marked. \label{thelemma}
\end{lem}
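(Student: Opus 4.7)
The plan is to translate the hypothesis into a single statement about the modified word $W'$ used to define descents. Recall that $W'$ is obtained from $\rw(T)$ by extracting the marked entries in the reverse of their order in $\rw(T)$, prepending this reversed subword to the (unchanged) subword of unmarked entries, and then erasing marks; and that $j$ is not a descent of $T$ precisely when $j$ appears before $j+1$ in $W'$. Applying this to each $j \in \{i, i+1, \ldots, i+k-1\}$, the non-descent hypothesis is equivalent to saying that $i, i+1, \ldots, i+k$ occur in $W'$ in exactly this order from left to right.

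From here I would split into four cases according to whether each of $i$ and $i+k$ is marked, using the structural fact that in $W'$ all marked entries precede all unmarked ones, that marked entries appear in the reverse of their order in $\rw(T)$, and that unmarked entries appear in the same order as in $\rw(T)$. When both $i$ and $i+k$ are marked, the reversal forces $i$ to come after $i+k$ in $\rw(T)$, and the desired conclusion ``$i$ is marked'' is immediate. When both are unmarked, the order in $\rw(T)$ agrees with the order in $W'$, so $i$ precedes $i+k$ in $\rw(T)$, and the conclusion ``$i+k$ is not marked'' is immediate. When $i$ is marked and $i+k$ is unmarked, the required order in $W'$ holds automatically; their order in $\rw(T)$ can go either way, but in each subcase the relevant conclusion of the lemma is already guaranteed directly by the markings. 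When $i$ is unmarked and $i+k$ is marked, $i+k$ would precede $i$ in $W'$, contradicting the order extracted from the hypothesis, so this case does not arise.

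I do not expect any serious obstacle here: the only real content is the first step, which repackages the iterated non-descent hypothesis as a linear order of $i, i+1, \ldots, i+k$ in $W'$; once that is in hand the four-case analysis is forced by the two-segment structure of $W'$. An alternative proof by induction on $k$, using the $k=1$ case established in the paragraph just above the lemma together with the position of $i+1$, would also work, but in that approach the impossible case manifests as a clash of markings rather than a direct contradiction of order in $W'$, which feels marginally less clean.
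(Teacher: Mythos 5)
Your proposal is correct, and it reaches the lemma by a genuinely different route from the paper. The paper proceeds by induction on $k$: the base case $k=1$ is exactly the observation recorded in the paragraph above the lemma, and the inductive step applies the hypothesis for $k-1$ to the pair $i+1,\, i+k$ (respectively $i,\, i+k-1$) and then invokes the $k=1$ case once more to relate $i$ to $i+1$ (respectively $i+k-1$ to $i+k$). You instead work directly with the two-segment word $W'$: transitivity collapses the iterated non-descent hypothesis into the single statement that $i, i+1, \ldots, i+k$ occur left to right in $W'$, after which each of the four marking cases is settled immediately by the facts that the marked block precedes the unmarked block, is internally in reverse reading order, while the unmarked block preserves reading order. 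Your version makes the lemma a one-step consequence of the global structure of $W'$ and needs no induction; the paper's version only ever compares adjacent pairs and so never has to invoke $W'$ again after the base case. Both are complete proofs of comparable length. One remark: the paper's displayed definition of $\Des$ literally reads ``$i$ is to the left of $i+1$,'' but the worked example $\Des(784913526)=\{2,3,6\}$ and the two sentences preceding the lemma show that the intended convention is the one you adopt, namely $i\in\Des$ iff $i+1$ precedes $i$ in $W'$; under the literal reading the lemma would already fail for unmarked tableaux, so you resolved that discrepancy correctly, and your first step (the equivalence of the hypothesis with the linear order of $i,\ldots,i+k$ in $W'$) is sound.
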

\begin{proof} If $k=1$, the result follows  directly from the definition of descent in marked words. Assume the statements hold for all $k'< k$, and let $k>1$. If $i+k$ is marked, by our induction hypothesis $i+1$ comes after $i+k$ in the reading word, and it is marked. Then as $i$ is not a descent, it cannot come before $i+1$ in the reading word, so it comes after $i+1$ and $i+k$. For the second part, assume $i$ is unmarked. Then by our induction hypothesis $i$ must come before $i+k-1$, and $i+k-1$ must be unmarked, which in turn implies $i+k$ must come after $i+k-1$ and $i$. 
\end{proof}

\begin{defn} Let $T$ be a marked peak composition tableaux of shape $\alpha$. Then $St'(T)$ is marked filling of the diagram of $\alpha$ given by assigning numbers $1, 2, 3... n$ to the boxes of $T$ in the following order,  and then marking $k$ if and only if the corresponding box is marked in $T$:
\begin{itemize}
    \item We assign numbers to the boxes in the order $1'<1<2'<2...$
   \item If there is more than one box of label $i$, the numbers increase following the order of the reading word.
   \item If there is more than one box of label $i'$, the numbers increase in the reverse order of the reading word.
\end{itemize}
\end{defn}

\begin{figure}[h]
      \begin{ytableau}
      1&2&5'\\
      3&4'\\
      4&4&5'\\
      5'\\
      \end{ytableau} \qquad \raisebox{-.4cm}{$\stackrel{St'}{\longrightarrow}$} \qquad
            \begin{ytableau}
      1&2&8'\\
      3&4'\\
      5&6&7'\\
      9'\\
      \end{ytableau} 
\end{figure}

\begin{prop} The filling $St'(T)$ defined above is a marked standard peak composition tableau.
\end{prop}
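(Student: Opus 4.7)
The plan is to verify the four defining conditions of Definition \ref{def:mpc} for $St'(T)$ along with the standardness property. Since $St'$ assigns the integers $1, \ldots, n$ bijectively to the cells of $\alpha$ with marks inherited from $T$, the filling $St'(T)$ is marked standard by construction. Conditions (1), (2), and (4) are immediate from the structure of $St'$: (1) holds because cells of the same row in $T$ are ordered consistently by the alphabet order of their batches, with reading-order tie-breaking within an unmarked batch; (2) holds because the cells of the first column of $T$ have pairwise distinct labels in the alphabet, placing them in strictly ordered batches under $St'$; and (4) is automatic since each integer appears exactly once in $St'(T)$, forbidding any coincidence $(r, 2) = i$ and $(r+1, 1) \in \{i, i'\}$.

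The substance of the proof is condition (3). I will prove by induction on $m$ that the set $S_m$ of the first $m$ cells processed by $St'$ forms a peak composition shape in $\alpha$. The base case $m = 0$ is trivial. For the inductive step, suppose $S_m$ has peak composition shape $(a_1, \ldots, a_\ell)$, and let $c = (r, c^*)$ be the next cell with $T$-label $\tau$. Because rows of $T$ are weakly increasing and all earlier $St'$-batches are complete, $S_m$ meets each row of $\alpha$ in a left-justified prefix, so adding $c$ extends row $r$'s prefix by one. The only obstructions to $S_m \cup \{c\}$ being a peak composition are: (a) $r > \ell + 1$, creating a hole below the last non-empty row; or (b) $r = \ell + 1$, $c^* = 1$, and $a_\ell = 1$, producing a trailing $(\ldots, 1, 1)$.

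Obstruction (a) is ruled out by strict first-column increase in $T$: for every $r' < r$, $(r', 1)$ has label strictly less than $\tau$ in the alphabet, so $(r', 1) \in S_m$ and rows $1, \ldots, r-1$ are non-empty, forcing $\ell \geq r - 1$. For obstruction (b), under $a_{r-1} = 1$ the unique cell of $S_m$ in row $r-1$ is $(r-1, 1)$, and the $T$-label $\mu$ of $(r-1, 2)$ must satisfy $\mu > j'$ in the alphabet (where $j$ is the underlying integer of $\tau$), since otherwise $(r-1, 2) \in S_m$. In the marked case $\tau = j'$, the subcase $\mu = j'$ is excluded because reverse reading order processes $(r-1, 2)$ in column $2$ before $(r, 1)$ in column $1$. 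If $\mu = j$ is unprimed, condition (4) of $T$ is violated, since $(r, 1) = \tau \in \{j, j'\}$. Otherwise $\mu$ has underlying integer strictly greater than $j$, and the subdiagram of $T$ labeled $\leq j$ contains $(r-1, 1)$ and $(r, 1)$ but excludes $(r-1, 2)$, producing a shape with a part $1$ at the non-last position $r - 1$, which violates condition (3) of $T$.

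The main obstacle is the bookkeeping: at each inductive step one must carefully track the contents of $S_m$ in terms of both the alphabet order of $St'$-batches and the within-batch reading or reverse-reading orders, and then invoke the appropriate condition on $T$ for each failure subcase. The reverse reading order in marked batches is essential: it guarantees that whenever a marked cell opens a new row at column $1$, any marked cell in a higher column of the previous row has already been added to $S_m$, so the previous row has at least two cells and the peak property is preserved.
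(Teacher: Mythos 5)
Your proof is correct and takes essentially the same route as the paper's: the crux in both is that for a first-column cell of $T$, the $St'$-value of its right-adjacent cell is smaller than that of the cell below it, which is exactly what keeps the prefix shapes peak compositions. Your write-up is in fact more careful than the paper's one-sentence justification of this point, since you correctly identify that the inequality relies on conditions (1), (3) and (4) of the marked tableau $T$ together with the reverse reading order on marked batches, rather than on condition (4) alone.
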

\begin{proof} By definition, $St'(T)$ contains each of the numbers $1'<1<2'<2...$ once, possibly marked. As each row in $T$ is weakly increasing, and any marked number $i'$ can only occur once in one row, the numbers in $St'(T)$ increase from left to right in each row. As the first column in $T$ is strictly increasing no $i$ or $i'$ is repeated, and the numbers in the first column of $St'(T)$ increase from bottom to top. Lastly, Condition (4) of Definition \ref{def:mpc} ensures that for any box in the first column, the number in its right adjacent box will be lower than the number in its bottom adjacent box, implying that the subdiagrams labeled with numbers less than some $k \leq n$ correspond to peak compositions.
\end{proof}

As $\Des(S') \subseteq [n-1]$, we can talk about the corresponding element $F_{\Des(S')}(X)$ of Gessel's fundamental basis, given in Equation $\ref{eq:F}$. In fact, these functions allow us to calculate the quasisymmetric Schur $Q$- and $P$-functions using only the marked standard peak composition tableaux.

\begin{thm} \label{thm:descent}The quasisymmetric Schur $Q$- and $P$-functions have a positive integral expansion in terms of $F_D$'s, given by:
\begin{eqnarray}
\label{eqn:defP} \widehat{P}_{\alpha}(X)&=&\sum_{S'\in\smpc^*(\alpha)}F_{\Des(S')}(X),\\
\label{eqn:defq} \widehat{Q}_{\alpha}(X)&=&\sum_{S'\in\smpc(\alpha)}F_{\Des(S')}(X).
\end{eqnarray}
\end{thm}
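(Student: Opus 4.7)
The plan is to combine the monomial expansion of $\widehat{P}_\alpha$ (respectively $\widehat{Q}_\alpha$) given by the preceding proposition with Equation (\ref{eq:F}), which expands $F_D$ in the monomial basis. Together these reduce Equation (\ref{eqn:defP}) to the combinatorial identity
\begin{align*}
\#\{T \in \mpc^*(\alpha) : \wgt(T) = \beta\} \;=\; \#\{S' \in \smpc^*(\alpha) : \beta \preceq \comp(\Des(S'))\}
\end{align*}
for every strong composition $\beta$ of $|\alpha|$, and similarly for Equation (\ref{eqn:defq}) with the stars dropped. I would exhibit the required bijection via the standardization $St'$: send $T$ with $\wgt(T) = \beta$ to $St'(T) \in \smpc^*(\alpha)$, and show that each fiber of $St'$ (restricted to $\mpc^*(\alpha)$) is parametrized by the weights $\beta$ that refine $\comp(\Des(St'(T)))$.

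First I would verify that $\wgt(T) \preceq \comp(\Des(St'(T)))$ for every $T$. This reduces to checking that whenever consecutive standardized labels $i, i+1$ come from cells of the same value $k$ in $T$, the integer $i$ is not a descent of $St'(T)$. A short case analysis distinguishing whether the two cells are both unmarked (so $St'$ places them in forward reading-word order), both marked (reverse reading-word order), or of mixed marking, together with the descent convention for marked words stated in the paragraph preceding Lemma \ref{thelemma}, resolves this.

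For the inverse direction, given $S' \in \smpc^*(\alpha)$ and a composition $\beta = (\beta_1, \ldots, \beta_m)$ refining $\comp(\Des(S'))$, I would define $T$ by grouping the labels $\{1, \ldots, n\}$ into the $m$ consecutive blocks prescribed by $\beta$, relabeling the $k$th block as $k$, and preserving marks. The main obstacle is verifying that $T \in \mpc^*(\alpha)$. Most of Definition \ref{def:mpc} transfers directly from $S'$, but the no-repeated-mark condition on rows, the peak-composition condition on intermediate subdiagrams, and condition (4) all need genuine input from Lemma \ref{thelemma}. The point is that within any block of $\beta$ --- which corresponds exactly to a run of non-descents of $S'$ --- the lemma pins down the pattern of marks: marked labels appear in reverse reading-word order and unmarked labels in forward reading-word order, and their positions are tightly controlled. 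This is precisely the pattern produced by $St'$ on a block of equal-valued cells of an MPCT, so collapsing the block to the common value $k$ preserves all axioms. The $\widehat{Q}$ case then follows by the identical argument applied to $\mpc(\alpha)$ and $\smpc(\alpha)$, since the freedom to mark first-column cells is preserved by both $St'$ and the destandardization.
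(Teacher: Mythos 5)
Your proposal is correct and follows essentially the same route as the paper: both reduce the identity to comparing coefficients of $M_\beta$, use the standardization $St'$ together with the forced block-destandardization, and invoke Lemma \ref{thelemma} to show that the fiber of $St'$ over $S'$ is exactly the set of tableaux whose weight refines $\comp(\Des(S'))$. The only cosmetic difference is that the paper runs the argument for $\widehat{Q}_\alpha$ on $\smpc(\alpha)$ and deduces the $\widehat{P}_\alpha$ case by multiplicity, whereas you treat the starred and unstarred sets in parallel.
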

\begin{proof}We will prove the case for $Q$-functions only, the case for $P$-functions follows by duplication. By the definition of $F_D$, it suffices to show that for any $S\in \smpc(\alpha)$ and any $\beta \preceq \comp(\Des(S))$ (equivalently any $\beta$ satisfying $\Des(\beta) \subseteq \Des(S)$) , there is a unique tableau $T_{S,\beta}\in \mpc(\alpha,\beta)$ that has $St'(T_{S,\beta})=S$, and all peak composition tableaux of shape $\alpha$ are of this form for some $S$ and $\beta$.

First observe that as we standardize following the order of the numbers, for any marked standard peak composition tableau $S$ and composition $\beta=(\beta_1,\beta_2...\beta_n)$ if there exists a tableau $T$ that standardizes to $S$ with $\wgt(T)=\beta$, then $T$ must be equal to $T_{S,\beta}$, obtained by replacing $1,2..\beta_1$ in $S$ by $1$, $\beta_1+1,...\beta_1+\beta_2$ by $2$ and so on, and carrying the marks. We will denote  the box labeled $k/k'$ in $S$ by $C_k$ and its label in $T_{S,\beta}$ by $c_k$, and use the notation $|c_k|=i$ to mean $c_k\in\{i,i'\}$.

 The next step is to show that if $\Des(\beta) \supseteq \Des(S)$ then $T_{S,\beta} \in \mpc(\alpha,\beta)$ and $St'(\ts)=S$. We first verify that the conditions $(1)-(4)$ of the definition of marked peak composition tableaux (Definition \ref{def:mpc}) are satisfied. Note that condition $(3)$ is trivially satisfied as $\bigcup_{a\leq k}C_{a}$ is a peak composition for all $k$. Let $C_a$ and $C_{a+k}$ be in the same row. Then, as rows increase from left to right in $S$, $C_{a+k}$ is to the right of $C_a$, and by our construction, $|c_a| \leq |c_{a+k}|$. Furthermore if $|c_a| = |c_{a+k}|$, then $a, a+1...a+k-1 \notin \Des(\ts) \subseteq \Des(S)$. Also as $C_{a+k}$ is to the right of $C_a$, $a+k$ comes after $a$ in the reading word, so by Lemma \ref{thelemma} $a+k$ is not marked in $S$, implying $c_{a+k}$ is not marked in $\ts$. This takes care of $(1)$. The second condition is analogous to the first. For $(4)$, assume that we have a box in the first column with $C_a$ right adjacent and $C_b$ bottom adjacent to it with $|c_a|=|c_b|=i$. We want to show that in this case we mush have $c_a=i'$. First note that as $\bigcup_{k < r}C_{k}$ is a peak composition, it cannot include two parts of size $1$, so we must have $b>a$. That means, if $c_a=i$ then $c_b$ is also equal to $i$, implying $a, a+1,\ldots,b-1$ are not descents. As $C_b$ comes before $C_a$ in the reading word, by Lemma \ref{thelemma} $c_a$ must be marked, contradicting the assumption $c_a=i$. This shows $\ts$ is a valid marked peak composition tableau, its standardization is well defined. Let us show that its standardization is indeed $\ts$. Assume $c_a=c_{a+1}=i$. Then $a\notin \Des(\ts)\subseteq \Des(S)$, so by Lemma $\ref{thelemma}$ $C_a$ comes after $C_{a+1}$ in the reading word. Similarly, if $c_a=c_{a+1}=i'$ then $C_a$ comes before $C_{a+1}$ in the reading word- so $S$ is indeed the labeling given by the map $St'$.

Lastly we will show that if $T_{S,\beta}$ is a marked peak composition tableau that satisfies $St'(T_{S,\beta})=S$, then $\Des(\beta)\supseteq \Des(S)$. Assume there exists an element $a$ in $\Des(S)\backslash \Des(\beta)$. As $a\notin \Des(\beta)$, we have $|c_a|=|c_{a+1}|=i$ for some $i$. Assume $C_a$ comes before $C_{a+1}$ in the reading word. Then, as $a$ is a descent of $S$, $a+1$ is marked in $S$, and therefore $c_{a+1}=i'$. As we count all the $i'$ before counting any $i$ in the standardization map, $c_a=i'$. As we count primed numbers in the reverse reading order in standardization, we need $a > a+1$ which is a contradiction. The case of $C_a$ coming after $C_{a+1}$ is symmetrical.
\end{proof}

In general, for any set $D \subseteq [n-1]$, the peak set of $D$ is given by: 
\begin{eqnarray*}
\Peak(D)&=&\{i\mid i\in \text{D}\text{ and }i-1 \notin \text{D}\}.
\end{eqnarray*}

Note that the peak compositions of $n$ are exactly those compositions that can be realized as the peak set of a subset $D\subset [n-1]$. In this note we are  mainly interested in the case when $D$ is the descent set of the reading word for a tableau, so we will use the shorthand $\Peak(T)$ to denote $\Peak(\Des(T))$.

\begin{lem}Let $S^0$ denote the standard marked peak composition tableau of shape $\alpha$ that has labels $1,2,\ldots, \alpha_1$ in the first row; labels $\alpha_1+1,\alpha_1+2,\ldots,\alpha_1+\alpha_2$ on the second row and so on. Then  $\comp(\Peak(S_0))=\alpha$. Furthermore, if $S$ is any other standard marked peak composition tableau of shape $\alpha$ , then $\comp(\Peak(S)) < \alpha$ with respect to the lexicographic order. \label{lem:order}
\end{lem}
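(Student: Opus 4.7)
The plan is to verify $\comp(\Peak(S^0)) = \alpha$ by direct calculation and then induct on $\ell(\alpha)$ for the strict inequality, reducing each non-trivial case to a tableau of shorter peak composition. For the first claim, since the label at cell $(i, j)$ of $S^0$ is $\alpha_1 + \cdots + \alpha_{i-1} + j$ and the reading word concatenates columns top-to-bottom, consecutive labels $k, k+1$ sit in adjacent columns of a common row \emph{except} when $k = \alpha_1 + \cdots + \alpha_i$ for some $i < \ell$, in which case $k+1$ starts row $i+1$ at column $1$ while $k$ sits in column $\alpha_i \geq 2$. This produces $\Des(S^0) = \{\alpha_1, \alpha_1+\alpha_2, \ldots, \alpha_1+\cdots+\alpha_{\ell-1}\}$, and since $\alpha_{i+1} \geq 2$ for every non-last part these descents are pairwise non-adjacent, so each is a peak and $\comp(\Peak(S^0)) = \alpha$.

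For the inequality I will induct on $\ell$; the base case $\ell = 1$ is immediate. For $\ell \geq 2$ and $S \neq S^0$, let $f$ be the largest integer such that labels $1, \ldots, f$ all lie in row $1$ of $S$; the peak composition subdiagram constraint forces $2 \leq f \leq \alpha_1$. If $f < \alpha_1$, then label $f + 1$ must sit at $(2, 1)$ — any other cell would leave a row of length one strictly above the last row — and a positional comparison in the reading word shows it appears in column $1$ at position $2$, strictly before labels $2, \ldots, f$ which occupy the tops of columns $2, \ldots, f$. This gives a descent at $f$ with no earlier descent, so the first peak of $S$ equals $f < \alpha_1$, yielding $\comp(\Peak(S))_1 < \alpha_1$ and hence $\comp(\Peak(S)) < \alpha$ in lex.

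If $f = \alpha_1$, row $1$ of $S$ matches row $1$ of $S^0$, and stripping row $1$ and subtracting $\alpha_1$ from every remaining label produces a marked peak composition tableau $S'$ of shape $\alpha' = (\alpha_2, \ldots, \alpha_\ell)$ satisfying $S = S^0 \iff S' = {S'}^0$. The key step is the identity
\[
\Des(S) = \{\alpha_1\} \cup \{\alpha_1 + d : d \in \Des(S')\},
\]
which I will prove by showing that removing row $1$ applies a uniform per-column additive shift to reading-word positions, preserving the relative order among labels in rows $\geq 2$. The peak set of $S$ is then recovered from that of $S'$ together with the boundary descent at $\alpha_1$, and since $\alpha = (\alpha_1) \oplus \alpha'$, the strict inequality for $S$ reduces by the inductive hypothesis applied to $S' \neq {S'}^0$ to the corresponding inequality for $S'$.

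The main technical obstacle I anticipate lies in the positional bookkeeping after stripping row $1$ — making precise that the shift is uniform within each column so that relative reading-word order is preserved — and in the correct treatment of marked cells, since marks permute their labels to the front of the descent word, which requires separate attention both to the case where marks in row $1$ of $S$ produce an early first descent and to the case where shifted descents of $S'$ fail to remain peaks of $S$.
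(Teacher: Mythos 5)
Your overall strategy is sound and is genuinely more structured than the paper's own argument: the paper simply asserts that the peaks of $\rw(S^0)$ are the row-leading labels and that any change in this ordering decreases the peak values, whereas you make this precise by isolating the first row where $S$ deviates from $S^0$ and inducting on $\ell(\alpha)$. Your computation of $\Des(S^0)$ and your treatment of the case $f<\alpha_1$ are correct (note that the paper's displayed definition of $\Des$ is reversed relative to its own worked example $\Des(784913526)=\{2,3,6\}$; you are using the example-consistent convention in which $i$ is a descent when $i+1$ precedes $i$). Two small repairs in that case: the reason $f+1$ must occupy $(2,1)$ is the strict increase down the first column (a lower cell of column $1$ would require a smaller label above it, and all labels $\le f$ sit in row $1$), not the subdiagram condition; and all you actually need is that $f+1$ lies in column $1$ while $f$ lies in column $f\ge 2$, which already gives $\min\Des(S)=\min\Peak(S)=f<\alpha_1$.

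The one genuine gap is the obstacle you flag but do not resolve in the case $f=\alpha_1$: whether the shifted descents of $S'$ remain peaks of $S$. The inductive step fails as stated if $1\in\Des(S')$, because then $\alpha_1+1$ is a descent but not a peak of $S$, so $\comp(\Peak(S))$ is no longer the concatenation of $(\alpha_1)$ with $\comp(\Peak(S'))$, and the inductive hypothesis $\comp(\Peak(S'))<\alpha'$ (whose first-coordinate comparison is then the vacuous $1<\alpha_2$) gives no control over the later parts of $\comp(\Peak(S))$. Fortunately this case never occurs: in any standard tableau the label $1$ occupies the cell $(1,1)$ and is therefore the first letter of the reading word, so $2$ cannot precede it and $1\notin\Des(S')$. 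With that observation $\Peak(S)=\{\alpha_1\}\cup(\alpha_1+\Peak(S'))$ holds exactly, $\comp(\Peak(S))$ is the concatenation of $(\alpha_1)$ with $\comp(\Peak(S'))$, and your induction closes. Your final worry about marked cells is moot: by Definition \ref{def:markedpct} a \emph{standard} tableau has no marked entries, and Theorem \ref{thm:main} only invokes the lemma for $S\in\spct(\alpha)$, so the descent word is just the reading word.
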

\begin{proof} First note that by its definition, the peaks of the reading word of $S_0$ are given by the rightmost number in each row except the last, so $\Peak(S_0)=
\{\alpha_1,\alpha_1+\alpha_2,\ldots,\alpha_1+\alpha_2+\cdots+\alpha_{n-1}\}$ and $\comp(\Peak(S_0))=\alpha$. Any change in this ordering results in a tableau $S$ with decreased peak values, so \\$\comp(\Peak(S))<\comp(\Peak(S_0)$) for any $S \neq S_0$.
\end{proof}

Now we are ready to state and prove our main theorem.

\begin{thm} \label{thm:main}The quasisymmetric Schur $P$-functions expand into peak functions as:
\begin{eqnarray*}
\widehat{P}_{\alpha}(X)&=&2^{-\ell(\alpha)}\sum_{S\in\spct(\alpha)}2^{|\Peak(S)|+1}G_{\Peak(S)}(X).
\end{eqnarray*}
In particular, they are positive, integral and unitriangular.
\end{thm}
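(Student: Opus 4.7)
The plan is to reduce to Theorem~\ref{thm:descent} and convert fundamental quasisymmetric functions to peak functions by grouping marked standard tableaux over their underlying unmarked standard tableau. It will be cleaner to first prove the $\widehat{Q}_\alpha$-version and then divide by $2^{\ell(\alpha)}$ using $\widehat{P}_\alpha = 2^{-\ell(\alpha)} \widehat{Q}_\alpha$. For any $S \in \spct(\alpha)$, each of the conditions (1)--(4) of Definition~\ref{def:mpc} is either vacuous (each value appears exactly once, so there are no row repetitions and condition~(4) carries no content) or independent of the choice of marking, so every $M \subseteq [n]$ yields a valid $S_M \in \smpc(\alpha)$. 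Theorem~\ref{thm:descent} then rewrites as
\[
\widehat{Q}_\alpha(X) \;=\; \sum_{S \in \spct(\alpha)} \sum_{M \subseteq [n]} F_{\Des(S_M)}(X).
\]

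The core step is the inner identity $\sum_{M \subseteq [n]} F_{\Des(S_M)} = 2^{|\Peak(S)|+1}\,G_{\Peak(S)}$. Writing $D_0 = \Des(S)$, the observation preceding Lemma~\ref{thelemma} implies that the mark of $j$ affects $\Des(S_M)$ only at position $i=j$ (when $j \in D_0$) and at position $i=j-1$ (when $j-1 \notin D_0$). Counting, for a target descent set $D$, the number of markings with $\Des(S_M) = D$ splits into cases on $j \in [n]$: entries $j \in \Peak(S)$ receive both constraints, and these are consistent exactly when one of $\{j-1,j\}$ lies in $D$---this is precisely the peak-function condition $\Peak(S) \subseteq D \triangle (D+1)$. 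Singly-constrained entries are forced, and the free entries are $j=1$ together with the position immediately after each maximal run in $D_0$, giving $1 + |\Peak(S)|$ free positions. Thus every admissible $D$ contributes with multiplicity $2^{|\Peak(S)|+1}$, and the sum equals $2^{|\Peak(S)|+1}\,G_{\Peak(S)}$ by the definition of $G_P$.

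Combining gives the stated formula, and what remains is to prove $|\Peak(S)| + 1 \geq \ell(\alpha)$ for every $S \in \spct(\alpha)$. Let $c_1 < \cdots < c_{\ell(\alpha)}$ denote the first-column entries; since the reading word begins with the first column read top to bottom, the position $\pi(c_j)$ of $c_j$ in the reading word is $j$. I would first show $c_{j+1} \geq c_j + 2$: after placing $c_j$, row $j$ of the current subdiagram has length $1$, and placing $c_j+1$ at $(j+1,1)$ would leave row $j$ still of length $1$ but no longer last, contradicting the peak-subdiagram property. Each intermediate value $k \in (c_j, c_{j+1})$ lies outside the first column, hence $\pi(k) > \ell(\alpha)$; the maximum of $\pi$ over $[c_j, c_{j+1}]$ is therefore strictly interior and, by injectivity, is a peak. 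This produces at least $\ell(\alpha)-1$ distinct peaks.

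Unitriangularity then follows from Lemma~\ref{lem:order}: the tableau $S_0$ contributes $G_\alpha$ with coefficient $2^{(\ell(\alpha)-1)+1-\ell(\alpha)} = 1$, while any other $S \in \spct(\alpha)$ contributes $G_{\Peak(S)}$ with $\comp(\Peak(S)) < \alpha$ in lex order and a positive integer coefficient. The main obstacle I anticipate is the multiplicity count in the inner identity---one must carefully verify that the four-case analysis yields the same count $2^{|\Peak(S)|+1}$ for every admissible $D$, and in particular that the free position $j=1$ never coincides with a run-end.
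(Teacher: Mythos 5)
Your proposal is correct and follows the same skeleton as the paper's proof: both decompose $\smpc(\alpha)$ into the $2^n$ markings of each $S\in\spct(\alpha)$ and then convert the fundamental expansion of Theorem \ref{thm:descent} into peak functions via the identity $\sum_{w'\in M(w)}F_{\Des(w')}=2^{|\Peak(\Des(w))|+1}G_{\Peak(\Des(w))}$. Two differences are worth recording. First, the paper simply cites an external reference for that identity, whereas you prove it by the constraint count on markings; your analysis is sound --- the doubly constrained positions are exactly the peaks of $\Des(S)$, consistency there is precisely the condition $\Peak(S)\subseteq D\,\triangle\,(D+1)$, and the free positions are $1$ together with the successor of each maximal run of $\Des(S)$, of which there are $|\Peak(S)|$, giving multiplicity $2^{|\Peak(S)|+1}$ for every admissible $D$. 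Second, and more substantively, your lemma $|\Peak(S)|+1\geq\ell(\alpha)$ appears nowhere in the paper: the printed proof establishes the formula and unitriangularity but never justifies integrality, which requires exactly this inequality since the coefficient of $G_{\Peak(S)}$ carries the factor $2^{|\Peak(S)|+1-\ell(\alpha)}$. Your argument for it (first-column entries occupy the first $\ell(\alpha)$ positions of the reading word, consecutive first-column entries differ by at least $2$ by the peak-subdiagram condition, so each gap contains an interior local maximum of the position function, which is a peak) is correct; note only that the paper's displayed definition of $\Des$ (``$i$ is to the left of $i+1$'') is a typo for the opposite convention, as its own computation $\Des(S_1)=\{2,3,6\}$ and Lemma \ref{lem:order} show, and under the intended convention peaks are indeed local maxima of the position function, exactly as you use them.
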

\begin{proof} This theorem follows from the following observation:
Let $w$ be an unmarked word of length $k$ with $\Peak(\Des(w))=P$. Denote by $M(w)$ the $2^k$ element set of possible marked versions of $w$. Then we have:
\begin{eqnarray}
G_{P}(X)=2^{-|P|-1}\sum_{w' \in M(w)}F_{\Des(w')}(X)
\end{eqnarray} A detailed proof of this can be found in \cite{ezgi}. Plugging in Equation \ref{eqn:defP} we get:
\begin{eqnarray*}
\sum_{S\in\spct(\alpha)}2^{|\Peak(S)|+1}G_{\Peak(S)}(X)=\mathop{\sum\sum}_{\substack{S\in\spct(\alpha)\\w' \in M(\rw(S))}}F_{\Des(w')}(X)=\\\sum_{S'\in\smpc(\alpha)}F_{\Des(S')}(X)=2^{\ell(\alpha)}\widehat{P}_{\alpha}(X).
\end{eqnarray*}
The fact that the expansion is unitriangular follows from Lemma \ref{lem:order}. For integrality, we need to show that for any standard peak composition tableaux $S$ of shape $\alpha$, the number of peaks of $S$ is at least $\ell(\alpha)-1$. Let $S$ be such a tableaux of length $k$. It follows from Definition \ref{def:mpc} Condition (3) that as peak compositions can not have a part of size one except at the last row, for any box in the first column, the box right adjacent has a larger label than the box directly below, so they are ordered as follows: 
\begin{center}
\begin{tikzpicture}[scale=.5]
\draw (0,0) grid (2,-1) (1,-1)--(1,-2)--(0,-2);
\draw[thick] (0,0)--(0,-2);
\draw (0,0)--(-.5,-.5) (0,-1)--(-.5,-1.5) (0,-1.5)--(-.5,-2) (0,-.5)--(-.5,-1);
\draw[dotted] (2,0)--(3,0) (2,-1)--(3,-1) (1,-2)--(2,-2);
\node at (.5,-.5) {$a$};
\node at (.5,-1.5) {$c$} ;
\node at (1.5,-.5) {$b$};
\end{tikzpicture}
\raisebox{.5cm}{\qquad \qquad $\Rightarrow  a<b<c$}
\end{center}
Note that $b$ comes after both $a$ and $c$ in the reading word. If $b$ is not a peak, either $b+1$ or $b-1$ comes after $b$ in the reading word, and is also between $a$ and $c$. Repeating this process, we can find a peak of $S$ between $a$ and $c$. As we can do this for any pair of consecutive entries in the first column, we have at least $\ell(\alpha)-1$ peaks. 
\end{proof}

\section{Further properties of quasisymmetric Schur $Q$-functions}

Equation \eqref{eq:alternating} gives an alternating expansion of Schur's $P$-functions into $\widehat{P}$ functions. A consequence of the negative signs appearing in this expansion is that the quasisymmetric Schur $P$-functions lack some of the positivity properties of their symmetric analogues.

First we will look at the expansions into quasisymmetric Schur functions $\{\RQS_{\alpha}\}_{\alpha}$  \cite{MR3097867}, Young quasisymmetric functions $\{\YQS_{\alpha}\}_{\alpha}$ \cite{MR3097867} and dual immaculate quasisymmetric functions$\{\mathfrak{S}_{\alpha}\}_{\alpha}$ \cite{MR3194160}.

\begin{thm} Quasisymmetric Schur $Q$-functions do not expand positively into quasisymmetric Schur functions or Young quasisymmetric Schur functions.
\end{thm}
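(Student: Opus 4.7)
The plan is to exhibit an explicit counterexample: a peak composition $\alpha$ for which $\widehat{Q}_\alpha$, expanded in the basis of Young quasischur functions $\{\widehat{s}_\beta\}_\beta$, has at least one negative coefficient. Since the $\widehat{s}_\beta$ with $|\beta|=n$ form a basis of $QSYM^{(n)}$, such an expansion exists and is unique, so producing a single negative coefficient at some fixed degree suffices to prove the theorem.

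To carry this out I would first compute $\widehat{Q}_\alpha$ in the monomial basis for small $\alpha$ by enumerating $\mpc(\alpha)$ and collecting $\sum_{T \in \mpc(\alpha)} M_{\wgt(T)}$, as established in the proposition following Definition \ref{def:mpc}. In parallel, I would compute each $\widehat{s}_\beta = \sum_{T \in SSYCT(\beta)} M_{\wgt(T)}$ for all compositions $\beta$ of $n=|\alpha|$. Because the weight of any tableau of shape $\beta$ is lexicographically bounded by $\beta$, the change-of-basis matrix from $\{\widehat{s}_\beta\}$ to $\{M_\gamma\}$ (restricted to degree $n$) is unitriangular and is inverted by back-substitution. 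Comparing the two monomial expansions then yields the $\widehat{s}_\beta$-expansion of $\widehat{Q}_\alpha$, whose coefficients can be inspected for negativity.

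The only real obstacle is locating a small enough $\alpha$ that still produces a negative coefficient. Very small cases are too rigid to be useful: for instance, when $\alpha$ is already a strict partition, $\widehat{Q}_\alpha$ coincides with Schur's $Q_\alpha$, which is known to be Schur-positive and therefore Young quasischur positive; similarly, at $n=3$ the signed expansion \eqref{eq:alternating} is short enough that no cancellation forces negativity in the $\widehat{s}$-basis. I would therefore systematically scan the non-partition peak compositions of $n=4$ and $n=5$, such as $(1,3)$, $(2,3)$ or $(3,2)$, compute both sides as above, and expect the smallest witness at $n=5$. Once a witness $\alpha$ is identified, the proof reduces to displaying the $\mpc(\alpha)$-enumeration, the resulting monomial expansion, and the single $\widehat{s}_\beta$ whose coefficient comes out strictly negative, which the reader can then verify by direct inspection.
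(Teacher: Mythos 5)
Your strategy is the same as the paper's: exhibit a peak composition $\alpha$ whose $\widehat{Q}_\alpha$ (equivalently $\widehat{P}_\alpha$, since they differ by the factor $2^{\ell(\alpha)}$) has a negative coefficient in the Young quasischur basis, found by comparing monomial expansions and inverting the unitriangular change-of-basis matrix. That computational framework is sound. The problem is that you never actually produce the witness, and for a proof whose entire content is a counterexample, the counterexample cannot be deferred: ``I would scan degrees $4$ and $5$ and expect the smallest witness at $n=5$'' is a research plan, not a proof. Moreover your prediction is wrong. The paper states that the smallest examples with negative coefficients are $\widehat{P}_{(3,2,1)}$ and $\widehat{P}_{(2,3,1)}$, i.e.\ they occur at degree $6$; your scan of $(2,3)$ and $(3,2)$ at $n=5$ would come up empty, and $(1,3)$ is not even a peak composition since a part equal to $1$ may only appear last.

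A secondary issue: your justification for skipping strict partitions --- that $\widehat{Q}_\lambda$ coincides with Schur's $Q_\lambda$ when $\lambda$ is a strict partition --- is false in general. Equation \eqref{eq:alternating} gives $Q_\lambda$ as an alternating sum over \emph{all} peak compositions sorting to $\lambda$, so for example $Q_{(3,2)}=\widehat{Q}_{(3,2)}-\widehat{Q}_{(2,3)}$ and $\widehat{Q}_{(3,2)}\neq Q_{(3,2)}$. The identity you want only holds when $\lambda$ is the unique peak composition in its sorting class (as with $(3,1)$, where $(1,3)$ is excluded). This does not invalidate your search heuristic, but it is the kind of unchecked claim that should not appear in a final write-up. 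To complete the proof you must carry the computation to degree $6$ and display, say, the coefficient of $\widehat{s}_{(1,2,2,1)}$ in $\widehat{P}_{(3,2,1)}$, which is $-1$.
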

\begin{proof} Two examples of minimum size where negative coefficients come up are listed below. 
\begin{multline*}
\widehat{P}_{(2,3)}=\RQS_{(1,4)}-\RQS_{(3,2)}+\RQS_{(1,2,2)}+\RQS_{(2,1,2)}+2\RQS_{(1,3,1)}-\RQS_{(1,1,2,1)}\\+\RQS_{(1,2,1,1)}+\RQS_{(2,1,1,1)},
\end{multline*}

\begin{multline*}
\widehat{P}_{(2,3,1)}=\YQS_{(2,3,1)}+\YQS_{(2,2,2)}+\YQS_{(2,2,1,1)}+\YQS_{(2,1,3)}+\YQS_{(2,1,2,1)}+\YQS_{(2,1,1,2)}\\+\YQS_{(1,4,1)}+\YQS_{(1,3,2)}-\YQS_{(1,2,2,1)}-\YQS_{(1,2,1,2)}-\YQS_{(1,2,1,1,1)}.
\end{multline*}
\end{proof}

\begin{cor} Quasisymmetric Schur functions do not expand positively into dual immaculate quasisymmetric functions.
\end{cor}
\begin{proof} This follows as dual immaculate quasisymmetric functions expand positively into young quasisymmetric
schur functions, as shown by Allen, Hallam and Mason in \cite{mason}. A minimal example with negative coefficients is given below:
\begin{eqnarray*}
\widehat{P}_{(3,1)}=\mathfrak{S}_{(3,1)}+\mathfrak{S}_{(2,2)}+\mathfrak{S}_{(2,1,1)}-\mathfrak{S}_{(1,3)}.
\end{eqnarray*}

\end{proof}


We will now consider the multiplication of quasisymmetric Schur $P$-functions.

\begin{thm} The multiplication of a Schur's $P$-function with a a quasisymmetric Schur ${P}$-function does not necessarily have a positive expansion into quasisymmetric Schur $P$-functions.
\end{thm}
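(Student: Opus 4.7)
The plan is to prove the statement by exhibiting an explicit counterexample: a pair $(\alpha,\lambda)$ with $\alpha \in \pc$ and $\lambda$ a strict partition, for which $\widehat{P}_{\alpha} \cdot P_{\lambda}$ has at least one negative coefficient in its expansion into $\widehat{P}$-functions. Before hunting for such a pair, I would observe that this expansion is even well defined: Theorem~\ref{thm:main} shows that the transition matrix from $\{\widehat{P}_{\beta}\}$ to the peak basis $\{G_{P}\}$ is integral and unitriangular with respect to the lexicographic order on peak compositions (Lemma~\ref{lem:order}), so the $\widehat{P}$-family is a basis of the peak subalgebra of $QSYM$. Since $P_{\lambda}$ is peak-positive, the product lies in this subalgebra and has a unique expansion into $\widehat{P}$'s.

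The natural route to the counterexample is to do all computations in the peak basis, where multiplication is tractable. Concretely, for a chosen pair $(\alpha, \lambda)$ I would: (i) expand $\widehat{P}_{\alpha}$ in the peak basis using Theorem~\ref{thm:main}; (ii) expand $P_{\lambda}$ in the peak basis (this is the standard peak-positive expansion of Schur's $P$-functions); (iii) carry out the product inside the peak algebra; and (iv) invert the unitriangular matrix from step~(i), restricted to peak compositions of the relevant size $|\alpha|+|\lambda|$, to recover the $\widehat{P}$-coefficients. A single negative entry in the output vector finishes the proof, and the explicit expansion can be written out, exactly as is done for the $\widehat{s}$-expansion of $\widehat{P}_{(3,2,1)}$ and $\widehat{P}_{(2,3,1)}$ in the previous theorem.

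The remaining task is a finite search over small $(\alpha,\lambda)$. In view of the $\widehat{s}$-counterexamples in degree $6$ displayed just above, I expect the smallest pair witnessing non-positivity to live in comparably low degree, say $|\alpha| + |\lambda| \leq 7$ or $8$; candidates such as $\widehat{P}_{(2,3,1)} \cdot P_{(1)}$ and $\widehat{P}_{(1,3)} \cdot P_{(2,1)}$ are the first ones I would test. The main obstacle is not conceptual but computational: the number of peak compositions, and hence the size of the matrix one must invert in step~(iv), grows quickly with degree, so in practice one carries out the search by computer rather than by hand. Once a negative coefficient appears, the proof is complete.
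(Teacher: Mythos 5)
Your overall strategy is the same as the paper's: the theorem is proved by exhibiting a single explicit counterexample, and your framework for producing one is sound (the $\widehat{P}_{\beta}$ are unitriangular over the peak basis by Theorem~\ref{thm:main} and Lemma~\ref{lem:order}, so they form a basis of the peak subalgebra, the product of a $\widehat{P}$ with a peak-positive $P_{\lambda}$ lives in that subalgebra, and one can compute the expansion by multiplying in the peak basis and inverting the unitriangular matrix). However, as written the proposal has a genuine gap: for an existence statement of this kind, the counterexample \emph{is} the proof, and you never actually produce one -- you only describe a search that ``should'' succeed. Until a specific product with a specific negative coefficient is written down and verified, nothing has been proved. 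The paper's witness is
\[
\widehat{P}_{(1)}(X)\,P_{(3,1)}(X)=\widehat{P}_{(4,1)}(X)+\widehat{P}_{(3,2)}(X)-\widehat{P}_{(2,3)}(X),
\]
which lives in degree $5$, below the range you guessed and not among your first candidates.

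Two smaller points. First, one of your proposed test cases, $\widehat{P}_{(1,3)}\cdot P_{(2,1)}$, is ill-posed: $(1,3)$ is not a peak composition (only the last part may equal $1$), so $\widehat{P}_{(1,3)}$ is not defined; likewise $(2,1)$ is not a strict partition in the usual sense required for $P_{(2,1)}$. Second, your heuristic that the smallest failure should occur near degree $6$ because that is where the $\widehat{s}$-counterexamples live is not a reliable guide -- those are counterexamples to a different positivity statement -- so the search really does need to start from the smallest nontrivial products, where the paper's example already appears.
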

\begin{proof}Positivity fails even in the simplest case of $\widehat{P}_{(1)}$:
\begin{equation*}
\widehat{P}_{(1)}(X)P_{(3,1)}(X)=\widehat{P}_{(4,1)}(X)+\widehat{P}_{(3,2)}(X)-\widehat{P}_{(2,3)}(X).
\end{equation*}\end{proof}
Note that we do not even have a set inclusion between the diagrams of $(2,3)$ and $(3,1)$ so we do not have a natural rule of calculating the expansion of $\widehat{P}_{(1)}P_{\lambda}$ by adding boxes to the diagram of $\lambda$ and assigning signs. Same statement applies to the multiplication of $\widehat{P}$ functions, as $\widehat{P}_{(3,1)}=P_{(3,1)}$.

\begin{cor} The multiplication of quasisymmetric Schur $P$-functions is not $\widehat{P}$-positive.
\end{cor}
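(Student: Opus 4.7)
The plan is to convert the counterexample of the previous theorem into a counterexample for the $\widehat{P}\cdot\widehat{P}$ product by replacing the Schur $P$ factor with its quasisymmetric analogue. The only algebraic input needed is the identity $\widehat{P}_{(3,1)}=P_{(3,1)}$, which is exactly the point recorded in the paragraph preceding the corollary.

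First I would establish that identity by invoking the $P$-analogue of Equation~\eqref{eq:alternating}, obtained by dividing both sides by $2^{\ell(\lambda)}$ (valid because $\ell(\alpha)=\ell(\lambda)$ for every rearrangement of a strict partition). For $\lambda=(3,1)$ the possible rearrangements are $(3,1)$ itself and $(1,3)$, but $(1,3)$ is disqualified as a peak composition because its initial part equals $1$. The alternating sum therefore collapses to its identity term, giving $P_{(3,1)}=\widehat{P}_{(3,1)}$.

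Substituting this equality into $\widehat{P}_{(1)}\cdot P_{(3,1)}=P_{(4,1)}+P_{(3,2)}-P_{(2,3)}$ from the preceding theorem yields an expansion of $\widehat{P}_{(1)}\cdot\widehat{P}_{(3,1)}$ in the basis $\{\widehat{P}_\alpha\}$. Rewriting each term on the right via the same alternating formula---the only peak rearrangement of $(4,1)$ is $(4,1)$ itself, while $(3,2)$ has peak rearrangements $(3,2)$ and $(2,3)$---one checks that a strictly negative coefficient on $\widehat{P}_{(2,3)}$ persists, regardless of whether the right-hand side of the previous theorem is interpreted in the Schur $P$ basis or already in the $\widehat{P}$ basis. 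That negative coefficient is the witness of failed $\widehat{P}$-positivity.

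No substantive obstacle is expected: the corollary is essentially immediate from the preceding theorem together with the collapse of Equation~\eqref{eq:alternating} at $\lambda=(3,1)$.
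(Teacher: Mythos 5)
Your proposal is correct and follows the paper's own route: the paper likewise deduces the corollary from the identity $\widehat{P}_{(3,1)}=P_{(3,1)}$ applied to the counterexample $\widehat{P}_{(1)}P_{(3,1)}=\widehat{P}_{(4,1)}+\widehat{P}_{(3,2)}-\widehat{P}_{(2,3)}$ of the preceding theorem. Your extra verification of that identity via the collapse of the alternating formula at $\lambda=(3,1)$ (since $(1,3)$ is not a peak composition) is a justification the paper leaves implicit, but it does not change the approach.
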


\bibliographystyle{plain}
\bibliography{bib}
\end{document}